\newtheorem*{rep@theorem}{\rep@title}
\newcommand{\newreptheorem}[2]{%
\newenvironment{rep#1}[1]{%
 \def\rep@title{#2~\ref{##1}}%
 \begin{rep@theorem}}%
 {\end{rep@theorem}}}
\theoremstyle{plain}
\newtheorem*{thm*}{Theorem}
\newtheorem{thm}{Theorem}
\newtheorem{lem}[thm]{Lemma}
\newtheorem*{lem*}{Lemma}
\newtheorem{conj}[thm]{Conjecture}
\theoremstyle{definition}
\newcommand{\RS}{\mathscr{R}}
\newcommand{\se}{\ \approx\ }
\newcommand{\ep}{\varepsilon}
 \newcommand{\Z}{\mathbb{Z}}
 \newcommand{\R}{\mathbb{R}}
\newcommand{\vanish}[1]{}
\def\({\left(}
\def\){\right)}
\def\no={\,{\,|\!\!\!\!\!=\,\,}}
\def\rank{\text\rm{rank}}
\def\no={\,{\,|\!\!\!\!\!=\,\,}}
\def\conv{\text{\rm{conv}}}
\newcommand{\xqedhere}[2]{%
  \rlap{\hbox to#1{\hfil\llap{\ensuremath{#2}}}}}
\newcommand\Defn[1]{\emph{#1}}
\newcommand{\cm}[1]{}
\newcommand\mc[1]{\mathcal{#1}}
\newcommand\mbf[1]{\mathbf{#1}}
\newcommand\mr[1]{\mathrm{#1}}
\newcommand{\bigslant}[2]{{\raisebox{.3em}{$#1$} \Big/ \raisebox{-.3em}{$#2$}}}
\renewcommand\emptyset{\varnothing}
\newcommand\RR{\mathbb{R}}
\title[The Universality theorem for neighborly polytopes]{The universality theorem for neighborly polytopes}
\author{Karim A.~Adiprasito}
\author{Arnau Padrol}
\address{Institut des Hautes \'Etudes Scientifiques, Bures-sur-Yvette, France}
\email{adiprasito@math.fu-berlin.de, adiprasito@ihes.fr}
\address{Institut f\"ur Mathematik, %
Freie Universit\"at Berlin, %
Germany}
\email{arnau.padrol@fu-berlin.de}
\keywords{Realization space, universality theorem, simplicial polytope, neighborly polytope}
\subjclass[2010]{Primary 52B40; Secondary 52C40, 14P10}
\date{\today}
\thanks{K.~A.~Adiprasito acknowledges support by an EPDI postdoctoral fellowship and by the Romanian NASR,
CNCS --- UEFISCDI, project PN-II-ID-PCE-2011-3-0533. The research of A.~Padrol is supported by the DFG
Collaborative Research Center SFB/TR~109 ``Discretization
in Geometry and Dynamics''.}
\begin{document}
\begin{abstract}
In this note, we prove that every open primary basic semialgebraic set is stably equivalent to the realization space of a neighborly simplicial polytope. This in particular
provides the final step for Mn\"ev's proof of the universality theorem for simplicial polytopes.
\end{abstract}

\maketitle

\section{Introduction}

Mn\"ev's Universality Theorem was a fundamental breakthrough in the theory of 
oriented matroids and convex polytopes. It states that the realization spaces 
of oriented matroids and polytopes, i.e.\ the spaces of point configurations 
with fixed oriented matroid/face lattice, can be arbitrarily complex. It comes in four
flavours:

\begin{thm}[Universality Theorem \cite{MnevRoklin}]\label{thm:universality}
Let $V$ be a primary basic semialgebraic set defined over $\Z$, then 
 \begin{compactenum}[\bf(i)]
  \item\label{it:UOM}  there is an \textbf{oriented matroid} of rank $3$ whose realization space is stably equivalent to $V$, and
  \item\label{it:UP}  there is a \textbf{polytope} whose realization space is stably equivalent to $V$;
 \end{compactenum}
if moreover $V$ is open, then 
 \begin{compactenum}[\bf(i)]
  \setcounter{enumi}{+2}
 \item\label{it:UUOM}  there is a \textbf{uniform oriented matroid} of rank $3$ whose realization space is stably equivalent to $V$, and
  \item\label{it:USP}  there is a \textbf{simplicial polytope} whose realization space is stably equivalent to $V$.
 \end{compactenum}
\end{thm}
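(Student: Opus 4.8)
The plan is to obtain all four parts as successive corners of one chain of constructions, each step a stable equivalence, starting from an arithmetic normalisation of $V$ and ending at the desired combinatorial object; the first three links go back to Mn\"ev, and the fourth is the contribution of this note. \textbf{Arithmetic normalisation.} First I would replace $V$ by a stably equivalent semialgebraic set in \emph{Shor normal form}: after adding slack variables and a projective rescaling, $V$ may be presented as the solution set of a system built only from the relations $x_i=1$, $x_i+x_j=x_k$ and $x_i\cdot x_j=x_k$, together with strict inequalities $x_i>0$ when $V$ is open. This is the routine encoding of a polynomial system over $\Z$ as a straight-line program, and it is the only place where the integrality hypothesis enters.

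\textbf{Rank $3$ oriented matroids.} Next, the classical von Staudt constructions realise such a normal form as a planar point configuration: fixing a line at infinity and a unit segment turns the prescribed collinearities among auxiliary points into geometric addition and multiplication of coordinates. The rank-$3$ oriented matroid recording these collinearities has realisation space stably equivalent to $V$, giving \ref{it:UOM}. When $V$ is open the construction has enough slack to push all auxiliary points into general position without destroying or creating incidences, making the oriented matroid uniform and giving \ref{it:UUOM}.

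\textbf{From the plane to polytopes.} To manufacture a polytope I would feed the planar configuration (or rather its oriented matroid) into the Lawrence construction --- equivalently, transport the data to the Gale-dual side --- which produces a polytope whose realisation space is again stably equivalent to $V$; this is \ref{it:UP}.

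\textbf{The simplicial case.} Lawrence polytopes are massively non-simplicial, and a generic perturbation to a simplicial polytope collapses the realisation space, so a new idea is needed, and this is exactly what the note supplies. The plan is to encode $V$ into a \emph{neighborly} polytope, which is automatically simplicial once it is $\lceil d/2\rceil$-neighborly. Starting from a configuration produced by the previous steps, one builds such a polytope by \emph{Gale sewing}: vertices are appended one at a time while the Gale diagram is steered so that each step is a fibre bundle with contractible fibres over the previous realisation space, hence a stable equivalence. The output is a neighborly simplicial polytope with realisation space stably equivalent to $V$, yielding \ref{it:USP} and completing the Universality Theorem. I expect the technical heart --- and the main obstacle --- to be carrying out the sewing so that simultaneously (a) $\lceil d/2\rceil$-neighborliness is preserved at every step, (b) the full combinatorial type is rigidly forced by the construction, and (c) no constraints on the coordinates are created beyond those already present in $V$.
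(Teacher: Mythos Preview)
Your outline for parts \eqref{it:UOM}--\eqref{it:UP} matches the standard treatment and the paper's citations. For part~\eqref{it:USP} your strategy---pass to the Gale side of the uniform rank~$3$ matroid, append points by lexicographic extensions so that each fibre is an open cell, and land in a neighborly polytope---is essentially the paper's approach. But two points need correction.

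First, the terminology ``Gale sewing'' is misleading here. In this paper \emph{Gale sewing} denotes a specific iterated construction starting from a polygon or a $3$-polytope, and Theorem~\ref{thm:sewn} shows that every Gale sewn polytope has \emph{contractible} realization space---so Gale sewing, as defined, cannot encode $V$. What the paper actually invokes is \emph{Kortenkamp's theorem}: any realizable rank~$3$ oriented matroid $M$ on $n$ elements can be extended by $n$ lexicographic extensions to a rank~$3$ matroid $\widetilde M$ on $2n$ elements whose Gale dual is an even-dimensional neighborly $(2n-4)$-polytope. This is exactly your obstacle~(a), and it is not something one ``steers'' step by step; it is a finished result one cites. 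Lemma~\ref{lem:lex} then gives $\RS(M)\se\RS(\widetilde M)$, which handles your obstacle~(c) for free.

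Second, your obstacle~(b)---that the face lattice must rigidly determine the oriented matroid---is not a property of the construction but a theorem about the output: Shemer (combinatorially) and Sturmfels (for realizations) proved that \emph{every} even-dimensional neighborly polytope is rigid, so $\RS(P)=\RS(\widetilde M)$ automatically. You correctly anticipated that rigidity is the crux (and that Lawrence extensions are useless here because they destroy simpliciality), but the resolution is to quote Shemer rather than to engineer rigidity into the sewing. Note also that neighborly implies simplicial only in \emph{even} dimension; the paper is careful to stay there.
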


Mn\"ev announced this theorem in 1985 \cite{MnevA} and published a sketch of the proof in 1988 \cite{MnevRoklin}. A more detailed reasoning can be found in his thesis \cite{MnevT} (in Russian). 
Shor \cite{Shor} simplified a key step in Mn\"ev's line of reasoning for part \eqref{it:UOM} and \eqref{it:UUOM}. 

Moreover, part \eqref{it:UOM} of Theorem~\ref{thm:universality} was later elaborated upon by Richter-Gebert \cite{RG95} and G\"unzel \cite{Gunzel}, who proved the stronger \emph{Universal Partition Theorem} for oriented matroids. Using {Lawrence extensions} to {rigidify} the face lattices, it is easy to prove part \eqref{it:UP} from part \eqref{it:UOM} \cite{MnevRoklin}\cite{RG99}. 
Here, a face lattice is called \Defn{rigid} if it uniquely determines the oriented matroid defined by its vertices.
Additionally, Theorem~\ref{thm:universality}\eqref{it:UP} was generalized greatly by Richter-Gebert, who proved that already $4$-dimensional polytopes are universal \cite{RGZ}\cite{RG}.

For a proof of part \eqref{it:USP}, in contrast, only Mn\"ev’s original papers are
available, apart of some preliminary results of Sturmfels \cite{sturmfels1988simplicial} and Bokowski--Guedes de Oliveira \cite{BG}. Moreover,
Mn\"ev’s elaborations for this case in \cite{MnevT}\cite{MnevRoklin} are specially concise and, in our opinion,
incomplete. Hence, we think part~\eqref{it:USP} of Theorem~\ref{thm:universality}, although widely believed to be true, should be considered open until now. It is important to stress that, despite the wrong common belief, Lawrence extensions \emph{cannot} be used to deduce the universality theorem for simplicial polytopes. We use a different approach to rigidify matroids, namely one based on a result of Shemer proving rigidity of neighborly polytopes~\cite{Shemer}.

In particular, we establish here that \emph{neighborly} polytopes, i.e.\ $d$-polytopes with a complete $\lfloor\frac{d}{2}\rfloor$-skeleton, are universal. Even more, we obtain this result with even-dimensional polytopes.
Since all neighborly polytopes of even dimension are simplicial, this provides a proof of Theorem~\ref{thm:universality}\eqref{it:USP}.

\begin{thm}\label{thm:unin}
Every open primary basic semialgebraic set defined over $\Z$ is stably equivalent to the realization space of some neighborly $(2n-4)$-dimensional polytope on $2n$ vertices. 
\end{thm}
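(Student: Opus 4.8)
The plan is to combine Mn\"ev's classical machinery with Shemer's rigidity theorem for neighborly polytopes, replacing the Lawrence-extension step by a neighborliness-preserving construction. I would start from Theorem~\ref{thm:universality}\eqref{it:UUOM}: given an open primary basic semialgebraic set $V$ defined over $\Z$, there is a uniform oriented matroid $M$ of rank $3$ whose realization space is stably equivalent to $V$. The goal is to ``lift'' $M$ to a neighborly even-dimensional polytope whose face lattice forces precisely the oriented-matroid data of $M$, so that realization spaces match up to stable equivalence.

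The key steps, in order. First, recall (or reprove) that any oriented matroid has realizations that can be perturbed into \emph{general position}, and that for a rank-$3$ uniform oriented matroid on few points the realization space is unchanged, up to stable equivalence, under adding points in sufficiently generic position; this lets me assume the ground set has a convenient size, $n$ points, and that one distinguished configuration is available. Second, apply the \emph{Gale duality} / \emph{affine Gale diagram} correspondence: a neighborly $(2n-4)$-polytope on $2n$ vertices is Gale-dual to a configuration of $2n$ points in rank $3$, i.e.\ a planar configuration, subject to the Gale evenness / neighborliness condition. So I want to encode $M$ (a rank-$3$ oriented matroid on $n$ points) inside a rank-$3$ oriented matroid $\widehat M$ on $2n$ points that (a) contains $M$ as the restriction to a subset, (b) satisfies the balancedness condition dual to neighborliness, and (c) is rigid as a face lattice. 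Step (c) is exactly where Shemer's theorem \cite{Shemer} enters: neighborly polytopes are rigid, so the face lattice of the resulting polytope determines the oriented matroid of its vertices, hence the realization space of the polytope equals (is stably equivalent to) the realization space of $\widehat M$, which in turn is stably equivalent to that of $M$, hence to $V$. Third, verify the stable-equivalence chain is indeed a chain of the admissible operations (rational changes of coordinates and stable projections), tracking that openness of $V$ is preserved throughout and that the added $2n-n$ Gale points contribute only contractible fibers.

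The main obstacle, which I expect to occupy the bulk of the work, is the explicit construction in step two: producing, from an arbitrary rank-$3$ uniform oriented matroid $M$, an extension $\widehat M$ on $2n$ points that is simultaneously (i) balanced in the Gale sense so its polar is neighborly of dimension $2n-4$, (ii) an honest oriented-matroid extension whose realization space does not grow (the new points must be pinned, up to a contractible choice, by the old ones), and (iii) still subject to Shemer's rigidity hypotheses. Getting all three at once is delicate because neighborliness is a strong global constraint on the Gale diagram, while ``not enlarging the realization space'' wants the new points to be free; the resolution should be to place the auxiliary points in a carefully chosen \emph{lexicographic} or \emph{one-element} sequence of extensions, each of which is ``rigid'' relative to the previous configuration, arranged so that the cumulative Gale condition is exactly Gale's evenness condition for neighborliness. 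I would also need to handle the parity/count bookkeeping so that the final dimension is $2n-4$ and the vertex count is $2n$ with $n$ matching the (possibly enlarged) ground set of $M$, and to confirm that ``primary basic'' and ``open'' are inherited at every stage.

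\begin{rem}
The step ``$M$ uniform of rank $3$ $\Rightarrow$ neighborly polytope'' is where the even-dimensionality is essential: only even-dimensional neighborly polytopes are automatically simplicial, so the Gale-dual configuration must be planar of the right size, and the construction should be set up so that the polar of the balanced extension is automatically simplicial, giving Theorem~\ref{thm:universality}\eqref{it:USP} as an immediate consequence.
\end{rem}
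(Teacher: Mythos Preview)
Your overall architecture matches the paper's proof exactly: start from a uniform rank-$3$ oriented matroid $M$ via Theorem~\ref{thm:universality}\eqref{it:UUOM}, extend $M$ by lexicographic extensions to a rank-$3$ matroid $\widehat M$ on $2n$ points that is the Gale dual of an even-dimensional neighborly polytope, use Lemma~\ref{lem:lex} to see that the lexicographic extensions preserve the realization space up to stable equivalence, and then invoke Shemer's rigidity theorem so that $\RS(P)=\RS(\widehat M)$.

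The one substantive point is that what you flag as ``the main obstacle\dots which I expect to occupy the bulk of the work'' is already a theorem in the literature: Kortenkamp~\cite{Kortenkamp} proved that every realizable rank-$3$ oriented matroid on $n$ elements can be extended by $n$ lexicographic extensions to the Gale dual of a neighborly $(2n-4)$-polytope on $2n$ vertices. This is precisely the construction your step two asks for, and the paper's proof simply cites it (Lemma~\ref{lem:K}). So your plan is correct, but you do not need to reinvent this step. Two minor corrections: the balancedness condition for neighborliness of the Gale dual is not ``Gale's evenness condition'' (that characterizes cyclic polytopes specifically), and your preliminary remark about perturbing into general position and enlarging the ground set is unnecessary once you have Kortenkamp's result in hand.
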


This is in contrast to the case of cyclic polytopes, who have trivial realization spaces \cite[Example~5.1]{BS86}. This holds more generally for all totally sewn and Gale sewn polytopes \cite{Shemer}\cite{Padrol}, of which cyclic polytopes are a particular case. These are large families of neighborly polytopes. Indeed, the number of Gale sewn polytopes with $n$ vertices in a fixed dimension $d$ is asymptotically at least $n^{\lfloor\frac{d}{2}\rfloor n(1-o(1))}$, which is the current best lower bound for the number of \emph{all} polytopes \cite{Padrol}.

\begin{thm}\label{thm:sewn} 

The (homogenized) realization space of any even-dimensional neighborly $d$-polytope on $n$ vertices constucted with the extended sewing or the Gale sewing construction
is contractible, and in fact an open, piecewise smooth $(d+1)\cdot (n-d-1)$-ball.
\end{thm}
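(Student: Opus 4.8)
The plan is to induct on the number of vertices $n$, proving at each stage all the asserted features at once: $\RS(P)$ is open, piecewise smooth, contractible, and homeomorphic to $\RR^{(d+1)(n-d-1)}$. Both the sewing/extended sewing and the Gale sewing constructions add one vertex at a time and may be started from a $d$-simplex (or a fixed small neighborly polytope with a one-point homogenized realization space); for the base case $n=d+1$ the homogenized realization space is a single point, an open $0$-ball, matching $(d+1)\big((d+1)-d-1\big)=0$. Since every even-dimensional neighborly polytope is simplicial and, by Shemer, rigid --- its face lattice determines its oriented matroid --- the homogenized realization space of such a polytope $Q$ on $N$ vertices is exactly the set of rank-$(d+1)$ vector configurations of $N$ vectors carrying the oriented matroid of $Q$, taken modulo $GL_{d+1}$; this is a semialgebraic set of dimension $N(d+1)-(d+1)^2=(d+1)(N-d-1)$.

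For the inductive step, suppose the neighborly $2m$-polytope $P$ on $n$ vertices arises from the neighborly $2m$-polytope $P'$ on $n-1$ vertices by sewing a vertex $v$ along a tower $T$ (extended sewing is handled identically, and Gale sewing after passing to the Gale dual; see below). The core assertion I would establish is that $(C',v)\mapsto C'\cup\{v\}$ is a semialgebraic homeomorphism
\[
\big\{(C',v)\ :\ C'\in\RS(P'),\ v\in B(C')\big\}\ \xrightarrow{\ \sim\ }\ \RS(P),
\]
where $B(C')\subseteq\RR^{d+1}$ is the set of vectors lying beyond exactly the facets of $C'$ prescribed by $T$ --- an open convex polyhedral cone. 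That the image lies in $\RS(P)$ uses the classical beyond/beneath description of a single-vertex extension (the face lattice of $C'\cup\{v\}$ depends only on the set of facets of $C'$ beyond which $v$ lies), together with Shemer's theorem that for $v\in B(C')$ this face lattice is that of $\sigma(T,P')=P$, and then rigidity of $P$ to upgrade ``same face lattice'' to ``same oriented matroid''. For surjectivity: given $C\in\RS(P)$, rigidity of $P$ forces $C$ to carry the oriented matroid of the distinguished realization $C^{\star}=C'^{\star}\cup\{v^{\star}\}$ produced by the construction; deleting from this oriented matroid the element corresponding to $v$ shows that $C\setminus v$ carries the oriented matroid of $P'$, hence $C\setminus v\in\RS(P')$, while the same oriented matroid identity pins down the facets of $C\setminus v$ beyond which $v$ lies as exactly those prescribed by $T$, i.e.\ $v\in B(C\setminus v)$. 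In particular $B(C')\neq\emptyset$ for every $C'$ in the image of the projection $\RS(P)\to\RS(P')$, and I would complete the picture by invoking Shemer's analysis of towers to conclude that $B(C')$ is a nonempty open convex cone for \emph{every} $C'\in\RS(P')$, so that the projection is onto all of $\RS(P')$.

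Granting this, $\RS(P)$ is the total space of a semialgebraic bundle over $\RS(P')$ whose fiber $B(C')$ is an open convex polyhedral cone homeomorphic to $\RR^{d+1}$. A fiberwise straight-line deformation retraction onto any continuous section already gives contractibility directly. To obtain an honest piecewise-smooth homeomorphism $\RS(P)\cong\RS(P')\times\RR^{d+1}$ I would trivialize the bundle by an explicit semialgebraic fiberwise chart: the facet normals of $C'$ depend semialgebraically on $C'$, so one can select a semialgebraic central point of $B(C')$ and identify $B(C')$ with $\RR^{d+1}$ depending semialgebraically on $C'$ (appealing to Hardt's semialgebraic triviality theorem if a patching over a stratification of the base is needed). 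By the induction hypothesis $\RS(P')\cong\RR^{(d+1)(n-d-2)}$, whence $\RS(P)\cong\RR^{(d+1)(n-d-2)}\times\RR^{d+1}=\RR^{(d+1)(n-d-1)}$, an open, piecewise smooth ball of the asserted dimension. For the Gale sewing case the same induction runs verbatim once one recalls that the homogenized realization space of a polytope coincides with that of any of its Gale duals; one Gale sewing step presents the Gale diagram of $P$ as a bundle over that of $P'$ with fiber again an open convex region of dimension $d+1$, so the bookkeeping is unchanged.

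The step I expect to be the main obstacle is precisely the \emph{uniformity over the whole realization space}: showing that for every realization $C'$ of $P'$ --- not merely the distinguished one coming from the construction --- the admissible region $B(C')$ is a nonempty open convex cone, and that deletion of the sewn vertex always lands back in $\RS(P')$; equivalently, that sewing and deletion of the sewn vertex are mutually inverse operations on realization spaces. This is exactly the point at which the rigidity of neighborly polytopes, and hence the restriction to even dimension, is used in an essential way; the remaining ingredients --- openness, the dimension count, semialgebraic local triviality, and piecewise smoothness --- are then routine.
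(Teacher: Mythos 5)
Your plan lands on the same basic structure as the paper's proof --- induct over the sequence of construction steps, show the deletion map to the smaller realization space is a semialgebraic bundle whose fibers are open polyhedra, and conclude contractibility and the ball structure --- but the paper packages this much more efficiently by observing that \emph{both} extended sewing and Gale sewing are, after passing to a Gale dual if necessary, sequences of \emph{lexicographic extensions}, and then invoking Lemma~\ref{lem:lex} verbatim at each step. That lemma already asserts surjectivity of the deletion map $\RS(M[a_1^{\sigma_1},\dots,a_k^{\sigma_k}])\to\RS(M)$ with polynomially parametrized polyhedral fibers of dimension $\rank\{a_i\}$, for \emph{every} realization in the base. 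So the part you single out as ``the main obstacle'' --- that the admissible region is a nonempty open polyhedron over every realization $C'\in\RS(P')$, not just the one the construction handed you --- is precisely what Lemma~\ref{lem:lex} discharges for free: given any realization, the explicit point $\sigma_1 a_1+\ep\sigma_2 a_2+\cdots$ realizes the extension, and the set of all points realizing it is cut out by finitely many strict inequalities depending polynomially on $C'$. Your detour through the beyond/beneath region $B(C')$ and rigidity to identify the fiber is not wrong, but it reproves a special case of the lemma with extra machinery; rigidity is only needed (in the paper and in your argument alike) to pass from the polytope realization space to the oriented matroid one at the very start.

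There is one genuine slip. Your bookkeeping for Gale sewing is off: a single Gale sewing step raises the polytope dimension by $2$ \emph{and} adds $2$ vertices, and it corresponds to \emph{two} lexicographic extensions in the Gale dual, each with fiber of dimension equal to the corank $n-d-1$, for a total of $2(n-d-1)$. Your claim that the fiber again has dimension $d+1$ ``so the bookkeeping is unchanged'' is incorrect in general (it is true only when $d+1=2(n-d-1)$). The resulting dimension of $\RS(P)$ does still come out to $(d+1)(n-d-1)$ --- both $d$ and $n$ shift --- but the inductive step as you wrote it does not certify this. The clean fix is exactly the paper's: do the induction on the number of lexicographic extensions in the dual, where each step has fiber dimension given directly by $\rank\{a_i\}$ from Lemma~\ref{lem:lex}, rather than trying to view a Gale sewing step as a primal single-vertex extension.
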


Encouraged by the work of Richter-Gebert, we make the daring conjecture that universality holds even when we restrict to neighborly $4$-dimensional polytopes.

\begin{conj}
Every open primary basic semialgebraic set defined over $\Z$ is stably equivalent to the realization space of some neighborly (and hence simplicial) $4$-polytope. 
\end{conj}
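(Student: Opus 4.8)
The plan is to deduce Theorem~\ref{thm:unin} from the universality theorem for uniform rank-$3$ oriented matroids (Theorem~\ref{thm:universality}\eqref{it:UUOM}), by turning such an oriented matroid into the Gale diagram of a neighborly even-dimensional polytope while preserving the moduli. Fix an open primary basic semialgebraic set $V$ defined over $\Z$. Theorem~\ref{thm:universality}\eqref{it:UUOM} provides a uniform rank-$3$ oriented matroid $M$, on $n$ elements say (after adjoining a few points in general position we may assume $n$ even, which changes $\RS(M)$ only by a stable equivalence), with $\RS(M)\se V$; realizing $M$ and fixing a projective frame identifies $\RS(M)$ with the space of $n$ labelled vectors in $\R^3$ having oriented matroid $M$.

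Now recall that a $d$-polytope on $N$ vertices is $\lfloor d/2\rfloor$-neighborly precisely when its Gale diagram, a rank-$(N-d-1)$ vector configuration, is \emph{balanced} --- every linear hyperplane through the origin leaves at least $\lfloor d/2\rfloor+1$ of the vectors strictly on each side --- and it is simplicial when that configuration is in addition uniform. With $N=2n$ and rank $3$ this forces $d=2n-4$ and a balance threshold of $n-1$. The idea is to grow $M$, which lives in exactly this rank, into a balanced uniform configuration $M'$ on $2n$ elements: morally one doubles the ground set, pairing each $v_i$ with a near-antipodal partner, since antipodal pairs are split evenly by every linear hyperplane. The interdependent sign conditions that such a partner must satisfy with respect to the hyperplanes spanned by pairs of earlier vectors are exactly what a \emph{sequential} construction can arrange, and this is carried out as a chain of Gale-sewing steps (cf.\ Theorem~\ref{thm:sewn}), each adjoining new vectors whose admissible positions form a nonempty contractible region determined by the vectors already placed, and each preserving the property of being the Gale diagram of a neighborly polytope. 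The result is a neighborly $(2n-4)$-polytope $P$ on $2n$ vertices, necessarily simplicial.

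Finally one compares realization spaces. As $P$ is neighborly and even-dimensional, Shemer's rigidity theorem~\cite{Shemer} shows its face lattice determines the oriented matroid of its vertex set, so $\RS(P)$ equals the realization space of that rank-$(2n-3)$ configuration, which under Gale duality is $\RS(M')$; and because every Gale-sewing step only adjoins coordinates confined to a contractible set, the forgetful map $\RS(M')\to\RS(M)$ is a tower of fibrations with contractible fibres, hence a stable equivalence. Composing, $\RS(P)\se\RS(M')\se\RS(M)\se V$, which is the assertion. I expect the main obstacle to be the construction of the second paragraph: proving that the chain of Gale-sewing steps can always be completed to a balanced uniform configuration on exactly $2n$ elements without ever obstructing the contractibility of the next step's admissible region --- in other words, that the flexibility of Gale sewing is enough to reconcile neighborliness, simpliciality and moduli-preservation simultaneously --- together with the routine but lengthy bookkeeping needed to identify the various realization spaces under Gale duality, homogenization and the choice of projective frame.
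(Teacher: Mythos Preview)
Your proposal does not prove the stated Conjecture. The Conjecture asks for a neighborly \emph{$4$-polytope} --- a fixed dimension, independent of the complexity of the semialgebraic set --- whereas your construction produces a neighborly $(2n{-}4)$-polytope whose dimension grows with~$n$. What you have sketched is precisely Theorem~\ref{thm:unin}, and indeed your opening sentence says so explicitly. The Conjecture is presented in the paper as an open problem, motivated heuristically via vertex figures and edge contractions; nothing in your argument bridges the gap between dimension $2n{-}4$ and dimension~$4$, and no such reduction is known.

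As a proof of Theorem~\ref{thm:unin}, your outline is essentially the paper's own argument: start from a uniform rank-$3$ matroid $M$ with $\RS(M)\se V$ (Lemma~\ref{lem:Shor}), extend it by $n$ lexicographic extensions to the Gale dual $\widetilde M$ of an even-dimensional neighborly polytope, invoke Shemer's rigidity to identify $\RS(P)$ with $\RS(\widetilde M)$, and use Lemma~\ref{lem:lex} to conclude $\RS(\widetilde M)\se\RS(M)$. The paper packages the extension step by citing Kortenkamp~\cite{Kortenkamp} directly rather than redescribing the near-antipodal doubling you sketch; your label ``Gale sewing'' is slightly off (in the paper's terminology that denotes a specific two-step extension, not Kortenkamp's doubling), but the underlying lexicographic mechanism is the same. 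The parity adjustment forcing $n$ even is unnecessary.
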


A universality conjecture for simplicial $4$-polytopes is supported by the existence of simplicial $4$-polytopes without the isotopy property, that is, with disconnected realization space \cite{BG}.

The following idea provides further motivation for our conjecture. Altshuler and Steinberg proved that vertex figures of neighborly $4$-polytopes are always $3$-dimensional stacked polytopes \cite{AlthsulerSteinberg}. Despite the fact that realization spaces of stacked polytopes are trivial, their oriented matroids can be complicated: Notice that if $M$ is any planar point configuration, then there exists a stacked $3$-polytope $P$ with a distinguished vertex $v$ such that the contraction of $v$ in $P$ coincides with $M$.

Now, realization spaces of planar point configurations are universal, and it is conceivable that these realization spaces can be captured by constructing neighborly $4$-polytopes having them as edge contractions, combined with the fact that all neighborly 4-polytopes are rigid.

\section{Universality for simplicial neighborly polytopes}

The \Defn{realization space} ${\RS}(M)$ of an oriented matroid $M$ is the set 
of vector configurations that share the same oriented matroid, i.e.\ if $M$ is of rank $d$, 
and $E=E(M)$ is the ground set of $M$, then
\[{\RS}(M)=\bigslant{\{X\in \RR^{E\times d} : \text{ $X$ realizes $M$}\}}{\mr{GL}(\R^d)}\]
Similarly, if $P$ is a polytope in $\R^d$, and $V=V(P)$ is its vertex set, then we define its (homogenized) realization space as
\[{\RS}(P)=\bigslant{\{X\in \RR^{V\times (d+1)} : \text{the positive span of $X$ realizes the pointed cone over $P$}\}}{\mr{GL}(\R^{d+1})}\]
Hence, the realization space ${\RS}(P)$ of a polytope is the union of the realization spaces of all oriented matroids whose (Las Vergnas) face lattice (i.e.\ the dual to the lattice of positive cocircuits) coincides with the face lattice of $P$, see \cite[Sec. 9.5]{BLSWZ}. 

A \Defn{basic semialgebraic set} in $\R^d$ is the set of solutions to a finite number of rational polynomial equalities and inequalities; it is called \Defn{primary} if all the inequalities in its definition are strict.
A basic semialgebraic set $S\subset \RR^d$ is a \Defn{stable projection} of a basic semialgebraic set $T\subset \RR^{d+d'}$ if, for the projection $\pi:\RR^{d+d'}\rightarrow \RR^{d}$, we have that $\pi(T)=S$ and that for every $\mbf x\in S$, the fiber $\pi^{-1}(\mbf x)$ is the relative interior of a non-empty polyhedron defined by equalities and strict inequalities that depend polynomially on $\mbf x$.
Two basic semialgebraic sets $S$ and $T$ are \Defn{rationally equivalent} if there is a homeomorphism $f:S \rightarrow T$ such that $f$ and $f^{-1}$ are rational functions.
Two basic semialgebraic sets $S$ and $T$ are \Defn{stably equivalent} if they belong to the same equivalence class generated by stable projections and rational equivalences. We denote the stable equivalence of $S$ and $T$ as $S\se T$. We refer to \cite{RG}\cite{RG99} for more detailed definitions of these concepts.

A technique that pervades our proofs is the use of \Defn{lexicographic extensions} (see~\cite[Section~7.2]{BLSWZ}). When $M$ is realized by a vector configuration $V$, then its lexicographic extension by $[a_1^{\sigma_1},\cdots, a_k^{\sigma_k}]$, where $a_i$ are elements of $V$ and $\sigma_i$ are signs, is realized by adjoining to $V$ the vector ${\sigma_1}a_1+\ep{\sigma_2}a_2+\cdots+\ep^{k-1} {\sigma_k}a_k$ for any $\ep>0$ small enough (cf. Figure~\ref{fig:lex}). The following lemma is straightforward, see also \cite[Lemma~8.2.1 and Proposition~8.2.2]{BLSWZ}.

\begin{lem}\label{lem:lex}
Let $M$ denote any oriented matroid, and let $M[a_1^{\sigma_1},\dots, a_k^{\sigma_k}]$ denote a lexicographic extension of~$M$. Then the projection induced by deletion
\begin{align*}
\RS(M[a_1^{\sigma_1},\dots, a_k^{\sigma_k}])\ \ &\longrightarrow\ \ \RS(M)\\
X\ \ &\longmapsto\ \ X_{|E(M)}
\end{align*}
 is surjective, and its fibers are (polynomially parametrized) polyhedra of dimension $\rank\{a_i\, :\, i \in [k]\}$.
\end{lem}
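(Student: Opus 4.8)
The plan is to analyze the map $X\mapsto X_{|E(M)}$ directly from the combinatorial description of lexicographic extensions. Recall that if $Y$ is a realization of $M[a_1^{\sigma_1},\dots,a_k^{\sigma_k}]$, then by definition the deletion $Y_{|E(M)}$ realizes $M$, so the map is well defined. For surjectivity, start with an arbitrary realization $X$ of $M$; I would then invoke the standard fact (this is essentially \cite[Proposition~8.2.2]{BLSWZ}) that the vector $v_\ep=\sigma_1 a_1+\ep\sigma_2 a_2+\dots+\ep^{k-1}\sigma_k a_k$, where each $a_i$ is now interpreted as the corresponding vector of $X$, realizes the prescribed lexicographic extension for all sufficiently small $\ep>0$. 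Adjoining $v_\ep$ to $X$ produces a point of $\RS(M[a_1^{\sigma_1},\dots,a_k^{\sigma_k}])$ mapping to $X$, giving surjectivity.

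For the fiber description, fix $X\in\RS(M)$ and consider which vectors $w\in\R^d$ can be adjoined to $X$ so that the result realizes the lexicographic extension. The lexicographic extension is characterized by its signed covectors (equivalently, by the sign vector it induces on every circuit/cocircuit of the extended matroid), and these conditions translate into a finite list of strict sign conditions on the inner products of $w$ with the covectors of $X$ — conditions that depend polynomially (indeed linearly) on the entries of $w$ once $X$ is fixed. Hence the fiber is the relative interior of a polyhedron, cut out by linear inequalities whose coefficients are polynomial in $X$; this is exactly the form required for the projection to be stable. To pin down the dimension, I would observe that the set of valid $w$ is a nonempty open subset (it contains $v_\ep$ for small $\ep$, and small perturbations of $v_\ep$ within the affine span of $a_1,\dots,a_k$ preserve all the strict sign conditions) of the linear span of $\{a_1,\dots,a_k\}$ in $X$, and conversely any valid $w$ must lie in that span — because the lexicographic extension does not introduce any new linear dependency beyond those forced by $a_1,\dots,a_k$, equivalently $w$ is in the flat generated by the $a_i$. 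Therefore the fiber has dimension exactly $\rank\{a_i : i\in[k]\}$.

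The main technical point — really the only one — is the precise verification that the defining conditions of the lexicographic extension, as $w$ varies with $X$ fixed, are genuinely a system of strict linear inequalities in $w$ with polynomial-in-$X$ coefficients, and that the solution set is the full-dimensional relative interior of the span of the $a_i$ rather than something smaller. This is where one must be careful to use the covector (rather than just circuit) characterization, so that all conditions come out as strict inequalities and none as equalities beyond the linear-span constraint; the homogeneous ("pointed cone") setup makes this clean. Everything else is bookkeeping, and the lemma follows; this is why we call it straightforward and refer to \cite[Lemma~8.2.1 and Proposition~8.2.2]{BLSWZ} for the underlying facts about lexicographic extensions.
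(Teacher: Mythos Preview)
Your argument is correct and is precisely the ``straightforward'' verification the paper has in mind: surjectivity via the explicit vector $v_\varepsilon$ and the fiber description via the cocircuit signature, both resting on \cite[Lemma~8.2.1 and Proposition~8.2.2]{BLSWZ}. The paper gives no further proof beyond that reference, so your write-up simply spells out what is left implicit; the one cosmetic slip is that you should say \emph{linear} span throughout (not affine), and the phrase ``does not introduce any new linear dependency'' is backwards --- the point is exactly that $p$ \emph{is} dependent on $a_1,\dots,a_k$, i.e.\ lies in their flat --- but your conclusion from it is the right one.
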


\begin{figure}[htbf]
\centering 
 \includegraphics[width=0.4\linewidth]{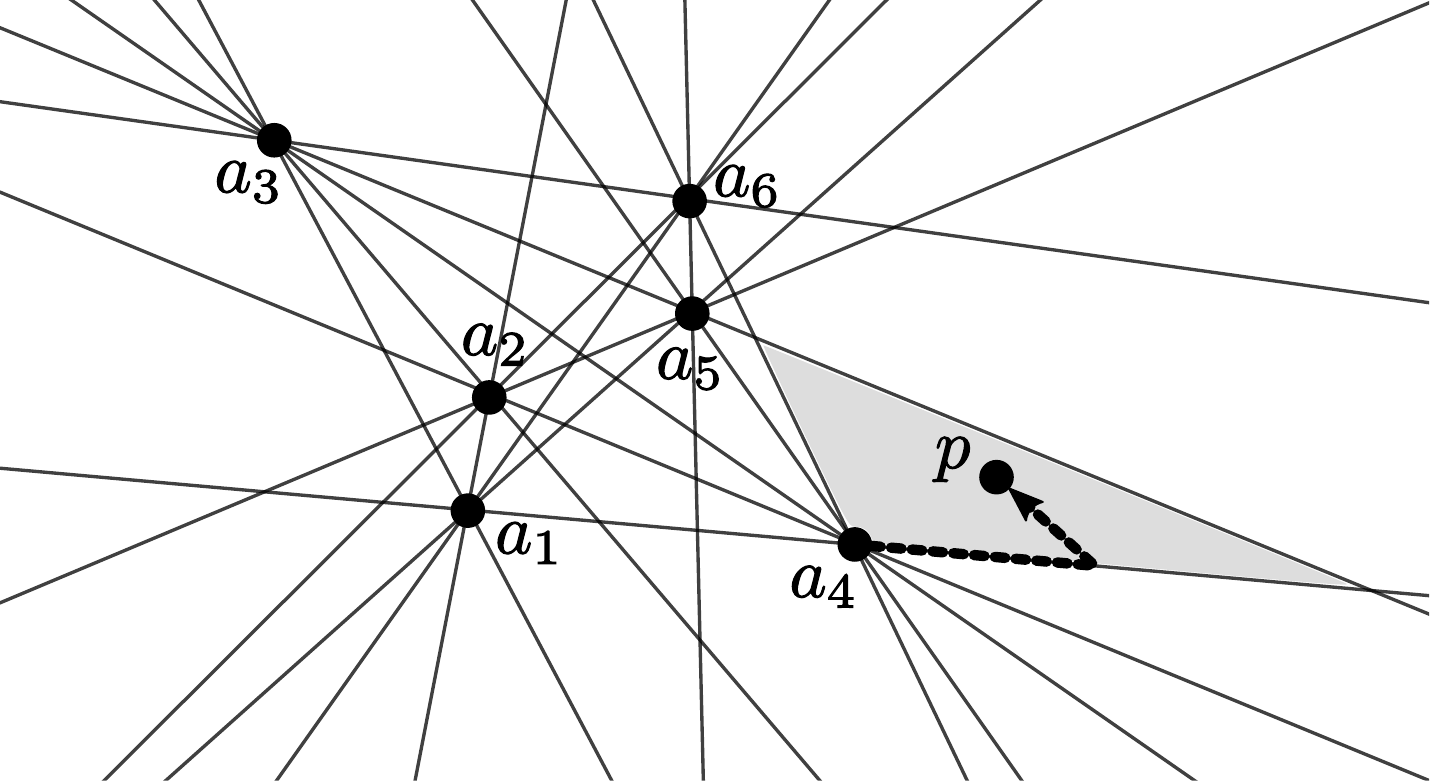} 
\caption{\small A lexicographic extension $M\longrightarrow M[a_4^+,a_1^-,a_6^+]$ and the fiber of the corresponding projection $\RS(M[a_4^+,a_1^-,a_6^+])\longrightarrow\RS(M)$.} 
  \label{fig:lex}
\end{figure}

We start with an observation of Mn\"ev/Shor concerning universality of uniform oriented matroids \cite{MnevA}\cite{Shor}, 
which is proven using {constructible} oriented matroids and a substitution
technique from \cite{MnevRoklin}\cite{JMSW}.

\begin{lem}\label{lem:Shor}
For every open primary basic semialgebraic set $S$ defined over $\Z$, there exists a rank~$3$ uniform oriented matroid $M$
such that
\[S\se \RS(M).\]
\end{lem}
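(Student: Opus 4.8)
The plan is to follow the classical Mn\"ev--Shor strategy in three steps: reduce $S$, up to stable equivalence, to an ``arithmetic normal form''; realize that normal form by a configuration of points and lines in the projective plane via von Staudt's addition and multiplication gadgets; and finally perturb the resulting (highly non-uniform) configuration into one in general position. For the first step I would bring $S$ into \emph{Shor normal form}: since $S$ is open, primary basic and defined over $\Z$, one can evaluate each defining polynomial by a straight-line program using only $0$, $1$, additions and multiplications, and introduce a fresh coordinate for every intermediate value. This exhibits $S$ as stably equivalent to a set $S'\subseteq\R^N$ in positive coordinates $x_1,\dots,x_N$, with $x_1$ fixed to a constant playing the role of a scale, in which every other coordinate is linked to two earlier ones by a single relation $x_i+x_j=x_k$ or $x_i\cdot x_j=x_k$, and in which the $x_i$ obey a prescribed strict linear order. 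Each introduction of an auxiliary coordinate is a stable projection --- the new coordinate being either determined by, or ranging over a relatively open interval depending polynomially on, the earlier ones --- so $S\se S'$; rationality of the coefficients and strictness of the inequalities are what make this possible.

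Next I would realize $S'$ by a rank-$3$ oriented matroid. Fix a projective frame in $\P^2$ consisting of a line $\ell$, two marked points on $\ell$ serving as $0$ and $1$, and one point off $\ell$, and represent $x_i$ by the point $P_i\in\ell$ with affine coordinate $x_i$. For each relation $x_i+x_j=x_k$ (resp.\ $x_i\cdot x_j=x_k$) insert a von Staudt addition (resp.\ multiplication) gadget: a bounded number of extra points off $\ell$ together with their spanning lines, arranged so that in every realization the forced collinearities pin $P_k$ to the point of $\ell$ with coordinate $x_i+x_j$ (resp.\ $x_i x_j$), independently of where the auxiliary points sit. The union of the frame, the $P_i$, and all gadget points is a planar point configuration; let $M$ be the associated rank-$3$ oriented matroid. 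Its chirotope is determined by the prescribed linear order from the first step together with the rigid combinatorics of the gadgets, so $M$ is well defined, and deleting the gadget points yields a composition of deletion projections $\RS(M)\to\RS(M_0)$ whose fibers are relatively open, polynomially parametrized polyhedra --- for the free auxiliary points this is Lemma~\ref{lem:lex}, and the output points $P_k$ are then uniquely pinned down. Here $M_0$ is the oriented matroid of the frame together with $P_1,\dots,P_N$, and $\RS(M_0)$ records exactly the admissible tuples $(x_1,\dots,x_N)$, i.e.\ the points of $S'$. Therefore $\RS(M)\se\RS(M_0)\se S'\se S$.

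Finally, $M$ is not uniform, as it carries all the incidences built into the von Staudt gadgets, so the last step is to replace it by a uniform rank-$3$ oriented matroid $\widehat M$ with $\RS(\widehat M)\se\RS(M)$. The idea --- precisely the ``constructible oriented matroid'' and substitution technique of Mn\"ev \cite{MnevRoklin} and of \cite{JMSW}, in the streamlined form due to Shor \cite{Shor} --- is to rebuild the configuration as a sequence of single-element lexicographic extensions of a uniform initial frame, placing each new point so generically that no spurious dependency is created, while the essential relations survive because they are re-encoded through the \emph{order type} of the new point against the already-placed lines rather than through exact incidences. It is here that openness of $S$ is indispensable: full-dimensionality of the locus $S'$ is what leaves room to perturb the gadgets into general position without leaving $S'$. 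Tracking the relatively open, polynomially parametrized fibers of the successive deletion projections then gives $S\se\RS(M)\se\RS(\widehat M)$ with $\widehat M$ uniform of rank $3$. I expect this last step to be the main obstacle: the first two steps are essentially bookkeeping once the right normal form is in hand, whereas turning the rigid von Staudt gadgets into a uniform configuration while \emph{provably} preserving the stable-equivalence class of the realization space is the delicate point, and the one that genuinely requires $S$ to be open.
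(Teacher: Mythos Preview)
The paper does not prove Lemma~\ref{lem:Shor} at all: it is quoted as a known result of Mn\"ev and Shor, with the one-line remark that it is ``proven using constructible oriented matroids and a substitution technique from \cite{MnevRoklin}\cite{JMSW}.'' Your three-step outline --- Shor normal form, von Staudt encoding, and perturbation to general position --- is precisely the content of those cited references, so you are reconstructing the proof the paper merely imports. At the level of a sketch your account is correct, and you rightly identify the third step as the genuine technical core; the only place I would tighten the wording is your description of how the incidences ``survive'' the perturbation, which in \cite{JMSW}\cite{Shor} goes via the fact that the non-uniform matroid is \emph{constructible} (built by lexicographic extensions) so that the substitution of each degenerate point by a small generic cluster can be tracked fiberwise, rather than via a direct re-encoding in the order type as you phrase it.
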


The key step of the proof of Theorem~\ref{thm:unin} is to use a construction of Kortenkamp, who proved that
every $d$-dimensional point configuration of at most $d+4$ points appears as a face
figure of a neighborly polytope~\cite{Kortenkamp}. For larger point configurations, this is
still an open problem, first asked by Perles.

For the proof, recall that in oriented matroid theory a polytope is called \Defn{rigid} if the face lattice of $P$ determines the oriented matroid $M$
spanned by the vertices of $P$ (see \cite[Section 6.6]{Z}). In particular, for all rigid polytopes, we have
$\RS(M)=\RS(P)$.

\begin{lem}\label{lem:K}
For every uniform oriented matroid $M$ of rank $3$ on $n$ elements, there exists a neighborly polytope $P$ with $2n$ vertices in dimension $2n-4$ such that
\[\RS(M)\se \RS(P).\]
\end{lem}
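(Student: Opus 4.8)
The plan is to realize $\RS(M)$ inside the realization space of a neighborly polytope by repeatedly applying Kortenkamp's construction together with lexicographic extensions, and then to control the resulting realization space via rigidity of neighborly polytopes. Concretely, I would proceed as follows. Given a uniform rank~$3$ oriented matroid $M$ on $n$ elements, I first want to produce, via Lemma~\ref{lem:lex}, an oriented matroid $M'$ on more elements whose realization space is stably equivalent to $\RS(M)$ and which is ``in convex position'' in the appropriate sense --- e.g. a rank~$3$ oriented matroid all of whose elements are extreme, so that it is the oriented matroid of a point configuration in convex position in the plane (a polygon with a prescribed order type). Each lexicographic extension changes the realization space only by a polynomially parametrized polyhedral fiber, hence preserves stable equivalence; so $\RS(M) \se \RS(M')$.

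Next I would invoke Kortenkamp's theorem in the form stated in the excerpt: every $d$-dimensional point configuration of at most $d+4$ points occurs as a face figure of a neighborly polytope. The key point is to apply this not to $M$ directly but in a way that lets the dimension and vertex count come out as $2n-4$ and $2n$. I expect the right move is: realize the planar configuration $M'$ (on roughly $n$ points) as an iterated vertex figure / face figure of a neighborly polytope $P$ built up step by step --- each step either a sewing-type operation (which, by the philosophy behind Theorem~\ref{thm:sewn}, contributes only a contractible, polynomially parametrized fiber to the realization space) or an application of Kortenkamp's $d+4$ result to push the configuration up one dimension while adding a controlled number of vertices. Bookkeeping the dimension and vertex count through these steps should give a neighborly $P$ of dimension $2n-4$ on $2n$ vertices; since $2n-4$ is even, $P$ is simplicial.

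The final ingredient is rigidity: by Shemer's theorem, neighborly polytopes are rigid, so the face lattice of $P$ determines the oriented matroid $N$ spanned by its vertices, and therefore $\RS(N) = \RS(P)$ as noted in the excerpt. It remains to check that the passage from $M'$ to its incarnation as a face figure inside $P$ induces a stable equivalence on realization spaces: the realization space of $P$ should fiber over the realization space of the face figure (recovering $M'$, hence $M$) with fibers that are relative interiors of polyhedra depending polynomially on the base point --- this is exactly the structure one gets from lexicographic extensions and sewing, where ``coning off'' or ``adding a point in lexicographic position'' contributes a polynomially parametrized polyhedral fiber. Chaining these stable equivalences with $\RS(M)\se\RS(M')$ yields $\RS(M)\se\RS(P)$.

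The main obstacle I anticipate is the bookkeeping in the middle step: Kortenkamp's result as quoted is only stated for configurations of at most $d+4$ points, whereas $M$ has an arbitrary number $n$ of elements, so one cannot apply it in a single shot. The real work is to iterate the construction --- alternating between sewing-type operations that add vertices without changing the order type of the relevant face figure and Kortenkamp-type steps that raise the dimension --- while (a) keeping the intermediate objects neighborly, (b) ensuring at each stage that the relevant sub-configuration stays within the ``$d+4$'' regime so Kortenkamp applies, and (c) tracking that the fibers of every intermediate deletion projection remain polynomially parametrized polyhedra so that stable equivalence is preserved at each step. Getting the arithmetic of dimension versus number of vertices to land precisely at $(2n-4, 2n)$, and verifying neighborliness is maintained throughout, is where the technical heart of the argument lies.
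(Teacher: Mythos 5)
Your proposal does not land on the paper's argument, and the obstacle you flag in the last paragraph is in fact a misreading of how Kortenkamp's theorem applies. You interpret the rank-$3$ matroid $M$ as a planar point configuration (so $d=2$ and the ``$d+4$'' bound reads as ``at most $6$ points''), and conclude you need to iterate. The correct reading is via Gale duality: a rank-$3$ oriented matroid on $n$ elements is the Gale dual of an $(n-4)$-dimensional point configuration on $n$ points, so the bound $d+4$ is \emph{automatically} met with equality ($n = (n-4)+4$), and Kortenkamp's theorem applies in a single shot with no iteration and no dimension-by-dimension bookkeeping.

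Concretely, the paper uses Kortenkamp's theorem in the dual formulation: every realizable rank-$3$ oriented matroid $M$ on $n$ elements extends, by $n$ lexicographic extensions, to a rank-$3$ matroid $\widetilde{M}$ on $2n$ elements that is the Gale dual of a neighborly $(2n-4)$-polytope $P$ with $2n$ vertices. Lemma~\ref{lem:lex} then gives $\RS(M) \se \RS(\widetilde{M})$ directly (your step of first passing to an $M'$ in convex position is unnecessary). Finally, since oriented matroid duality preserves realization spaces and even-dimensional neighborly polytopes are rigid by Shemer and Sturmfels, $\RS(P) = \RS(\widetilde{M})$. Your proposal does invoke rigidity correctly at the end and correctly identifies lexicographic extensions as the mechanism controlling fibers, but it never passes through the Gale dual, which is where the whole construction lives; the envisioned alternation of ``sewing-type'' and ``Kortenkamp-type'' steps is left as an admitted gap and is not actually needed.
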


\begin{proof}
By a theorem of Kortenkamp \cite[Theorem 1.2]{Kortenkamp}, every realizable oriented matroid of rank $3$ can be extended to the Gale dual of an even-dimensional neighborly polytope by performing $n$ lexicographic extensions, obtaining a rank $3$ matroid  $\widetilde{M}$ on $2n$ elements. By 
Lemma~\ref{lem:lex} we obtain
\[\RS(M)\se \RS(\widetilde{M}).\]
Let $P$ be the face lattice of the Gale dual of $\widetilde M$. Now, oriented matroid duality preserves realization spaces and, by \cite[Theorem 2.10]{Shemer} and \cite[Theorem 4.2]{Sturmfels88}, every neighborly polytope of even dimension is rigid. Hence,
\[\RS(P)\ \se\ \RS(\widetilde{M}). \qedhere\]
\end{proof}

Together with Lemma \ref{lem:Shor}, this finishes the proof of Theorem~\ref{thm:unin}. 
It remains to characterize the realization spaces of sewn and Gale sewn polytopes.

Let $P$ be a polytope with a flag of faces $\mc F=\{\emptyset=F_0\subset F_1\subset F_2 \subset \dots \subset F_k \subset F_{k+1}=P\}$, and define $U_i=V(F_i)\setminus V(F_{i-1})$.
A point $p$ is said to be \Defn{sewn} onto $P$ through $\mc F$ if it realizes the lexicographic extension of $P$ by $[U_1^+,U_2^-,U_3^+,\dots,U_{k+1}^{(-1)^{k}}]$, where these sets represent their elements in any order.
Shemer~\cite{Shemer} proved that if $P$ is even dimensional and neighborly and $p$ is sewn through a \Defn{universal flag} then $\conv(P\cup p)$ is also neighborly. Here, a \Defn{universal flag} of $P$ is a flag consisting of faces in every odd dimension, and such that the quotients of $P$ by these faces are still neighborly.
This is extended in \cite{Padrol} by relaxing the condition of being a universal flag to containing a universal subflag. 
This technique of generating neighborly polytopes --- iteratively sewing starting from a cyclic polytope --- is called \Defn{the (extended) sewing construction} (see \cite{Shemer} and \cite[Section~3]{Padrol} for details).

Similarly, let $M$ be the oriented matroid of a neighborly $d$-polytope $P$, with dual $M^*$, and let $N$ be an oriented matroid whose dual is obtained by doing first a lexicographic extension in general position of $M^*$ by $p=[a_1^{\sigma_1},\dots, a_r^{\sigma_r}]$ followed by a lexicographic extension by $q=[p^{-},a_1^-,\dots, a_{r-1}^-]$. Then $N$ is also the oriented matroid of a neighborly polytope (of dimension $d+2$). This operation is called \Defn{Gale sewing}, and the neighborly polytopes obtained by repeating this procedure from a polygon or a $3$-polytope are called \Defn{Gale sewn} (cf.~\cite{Padrol}).

Cyclic polytopes arise as a special case of these constructions: they can be obtained by repeatedly extended sewing from a simplex, as well as by Gale sewing from a polygon or certain stacked $3$-polytope \cite{Padrol}.

\begin{proof}[\textbf{Proof of Theorem \ref{thm:sewn}}]
Observe first that, since these are even-dimensional neighborly polytopes, they are rigid and therefore it suffices to argue at the level of realization spaces of oriented matroids (instead of polytopes).
Moreover, up to a change to the dual in Gale sewing that does not affect the realization space, both constructions are based on performing a sequence of lexicographic extensions starting on a configuration with trivial realization space. 

Hence, at the oriented matroid level, their realizations
spaces are open and contractible. Indeed, in each step the fibers of the deletion map are
open polyhedra (Lemma~\ref{lem:lex}). Even more, since these fibers are piecewise polynomially (and hence piecewise smoothly)
parametrized, then the whole realization space is a piecewise smooth ball.
\end{proof}

{\small \textbf{Acknowledgements} We want to thank G\"unter Ziegler for his insightful comments on a previous version of this manuscript.
Also, we wish to thank Nikolai Mn\"ev, J\"urgen Richter-Gebert and Bernd Sturmfels for helpful discussions concerning the history of the universality theorem, and the state of the universality theorem for simplicial polytopes in particular.
Finally, we want to thank Ivan Izmestiev for translating part of N. Mn\"ev's doctoral thesis, and
Hiroyuki Miyata for sparking our interest in this problem.}

{\small
\newcommand{\etalchar}[1]{$^{#1}$}
\providecommand{\noopsort}[1]{}\providecommand{\noopsort}[1]{}
\providecommand{\bysame}{\leavevmode\hbox to3em{\hrulefill}\thinspace}
\providecommand{\MR}{\relax\ifhmode\unskip\space\fi MR }
\providecommand{\MRhref}[2]{%
  \href{http://www.ams.org/mathscinet-getitem?mr=#1}{#2}
}
\providecommand{\href}[2]{#2}

}
\end{document}